\title{A converse theorem for degree 2 elements of the Selberg class with restricted gamma factor.}
\author{Michael Farmer}
\newtheorem{thm}{Theorem}[section]
\newtheorem{lem}[thm]{Lemma}
\theoremstyle{definition}
\theoremstyle{remark}
\newtheorem*{rem}{Remark}
\newcommand{\ind}{\mathbbm{1}_{ \chi= \chi_0}}
\newcommand{\F}{L_f}
\newcommand{\Mod}[1]{\ (\mathrm{mod}\ #1)}
\begin{document}
\maketitle
\begin{abstract}
    We  prove a converse theorem for a family of L functions of degree 2  with gamma factor coming from a holomorphic cuspform. We show these L functions coincide with either those coming from a newform or a product of L functions arising from Dirichlet characters. We require some analytic data on the Euler factors, but don't require anything on the shape. We also suppose that the twisted L functions satisfy expected functional equations. We incorporate the ideas from \cite{booker2014weil} so that the non-trivial twists are allowed to have arbitrary poles. 
\end{abstract}
\section{Introduction}
In \cite{Converse}, Booker proves a version of Weil's converse theorem without needing knowledge of the root number. We extend on his work, by not assuming a specific shape of the Euler factors. However we do still need to have some analytic data on the Euler factors, namely holomorphy and non-vanishing on and to the right of the line $\Re(s)=1/2$. This is a natural restriction to make when looking from the perspective of the Selberg class (see  \cite{perelli2005survey} for details on the Selberg class).
Our result also makes partial progress of extending the result of Kaczorowski, Perelli in \cite{kaczorowski2018hecke} to general conductor. In their paper, they get their result for conductor $5$ without any information on twists through the theory of non-linear twists of L functions. Combining our theorem with a result classifying the gamma factors of degree 2 elements of the Selberg class (as in \cite{kaczorowski2020classification} for conductor $1$) , we could prove a more general theorem.   \\
Let $S_k ^{\text{new}} (\Gamma_0 (N) , \xi)$ denote the newforms of weight $k$, level $N$ and  nebentypus character $\xi$. 
We prove the following theorem. 
\begin{thm}
\label{Main thm}
Let $\{a_n \}_{n=1}^{\infty}$ be a multiplicative sequence of complex numbers satisfying $a_n= O(n^{\lambda})$ for some $\lambda \in \mathbb{R}_{>0}$ and such that $\sum_{j=0}^{\infty} a_{p^j} p^{-js} $  is analytic and non-vanishing for $\Re(s) \geq 1/2$ and every prime $p$. Fix positive  integers $k,N$. For any  primitive Dirichlet character $\chi$ of conductor $q$ coprime to $N$, define 
\[  \Lambda_{\chi} (s)= \Gamma_{\mathbb{C}} \left(  s+ \frac{k-1}{2}  \right) \sum_{n=1}^{\infty} a_n  \chi(n) n^{-s}                      \]
for $\Re(s)> 1 + \lambda$, where $\Gamma_{\mathbb{C}} (s) =  2 (2 \pi)^{-s} \Gamma(s)$. Suppose, for every such $\chi$, that $\Lambda_{\chi}(s)$
continues to a meromorphic function on $\mathbb{C}$ and satisfies the functional equation
\begin{equation}{\label{Functional equation for twists}}
     \Lambda_{\chi}(s) = \varepsilon_{\chi} (Nq^2)^{1/2 -s} \overline{ \Lambda_{\chi}  ( 1 - \overline{s} )}          , 
\end{equation}  
for some $\varepsilon_{\chi} \in \mathbb{C}$. Let $\mathbf{1}$ denote the character of modulus $1$ and suppose there is a nonzero polynomial $P$ such that $P(s) \Lambda_{\mathbf{1}} (s)$ continues to an entire function of finite order. 

Then one of the following holds: \\
(i) $k=1$ and there are primitive characters $\xi_1 \pmod{N_1}$ and $\xi_2 \pmod{N_2}$ such that $N_1 N_2 = N$ and $  \sum_{n=1}^{\infty} a_n n^{-s} =  L(s, \xi_1) L(s , \xi_2)$, where $ L(s, \xi_1), L(s, \xi_2)$ are the usual Dirichlet L functions. \\
(ii) $  \sum_{n=1}^{\infty} a_n n^{\frac{k-1}{2}} e(nz)  \in S_k ^{\text{new}} (\Gamma_0 (N) , \xi)$ for some Dirichlet character $\xi$ of conductor  dividing $N$. 
\end{thm}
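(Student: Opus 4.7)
My strategy is to establish the hypotheses of the Weil-type converse theorem from \cite{booker2014weil}, which permits arbitrary poles in the twisted $L$-functions. Write $L(s)=\sum_{n=1}^{\infty}a_n n^{-s}$ and $L(s,\chi)=\sum_{n=1}^{\infty}a_n\chi(n)n^{-s}$. The plan has three stages: first, rigidify the local Euler factor at each prime into the standard degree-$2$ shape; second, show the root numbers $\varepsilon_\chi$ come from a nebentypus character $\xi$; third, invoke the converse theorem to conclude either modularity or a degenerate factorisation.

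I would begin by applying \eqref{Functional equation for twists} twice, which forces $|\varepsilon_\chi|=1$ and links $\varepsilon_\chi$ with $\varepsilon_{\bar\chi}$. For a prime $p$ coprime to $N$ and a primitive $\chi$ of prime-power conductor $p^r$, comparing Euler products on the two sides of the functional equation, while using holomorphy and non-vanishing of the local factor $F_p(s):=\sum_{j\geq 0}a_{p^j}p^{-js}$ on $\Re(s)\geq 1/2$, should force $F_p$ to be a rational function of $p^{-s}$ of the form $(1-\alpha_p p^{-s})^{-1}(1-\beta_p p^{-s})^{-1}$ and simultaneously yield $\varepsilon_\chi=\varepsilon\cdot\xi(p)\cdot\tau(\chi)^{2}/p$ for some fixed $\varepsilon$ of modulus one and a value $\xi(p)=\alpha_p\beta_p$. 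For primes dividing $N$ the same analysis gives a local factor of degree at most two with an admissible local epsilon factor. The rigidity here comes entirely from the non-vanishing hypothesis: higher-degree local factors would necessarily contribute extraneous zeros or poles in the critical strip, contradicting the assumption.

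I would then verify that the values $\xi(p)$ fit together into a Dirichlet character $\xi$ of conductor dividing $N$, by running the comparison for characters of composite conductor and extracting multiplicativity of $\xi$ from the consistency of the resulting root numbers. At this stage the hypotheses of \cite{booker2014weil} are in place: the local factors have the expected shape, the twisted functional equations carry standard root numbers, the polynomial $P(s)\Lambda_{\mathbf{1}}(s)$ provides entire continuation for the trivial twist, and nontrivial twists are allowed arbitrary poles. The converse theorem then produces an automorphic object $f$ on $\Gamma_0(N)$ of weight $k$ with nebentypus $\xi$. For $k\geq 2$, the gamma factor $\Gamma_{\mathbb{C}}(s+(k-1)/2)$ forces $f$ to be a holomorphic cusp form, and newness follows because the conductor in \eqref{Functional equation for twists} is exactly $Nq^2$, not a multiple of the conductor of an oldform; this yields case (ii). For $k=1$, the converse theorem admits an Eisenstein alternative, and the multiplicativity of $a_n$ combined with the Euler product expansion of Eisenstein series produces the factorisation $L(s)=L(s,\xi_1)L(s,\xi_2)$ with $N_1N_2=N$ of case (i).

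The main obstacle I anticipate is the first stage: deriving the degree-$2$ shape of the local factor $F_p$ purely from holomorphy and non-vanishing, with no a priori structural hypothesis. Classical Weil-style converse theorems take this shape as input, whereas here it must be extracted, in the spirit of the degree-$2$ Selberg-class classification of \cite{kaczorowski2020classification}. The argument must leverage the rigidity of the functional equation at $p$, the growth bound $a_n=O(n^\lambda)$, and the non-vanishing in the critical strip simultaneously, and must be robust across all prime-power conductors $p^r$. Handling the polar case via \cite{booker2014weil} will additionally require constructing auxiliary Dirichlet series that absorb the poles of the nontrivial twists while remaining compatible with the additive-twist Mellin-transform machinery underlying Weil's original argument.
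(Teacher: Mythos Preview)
Your proposal has a genuine gap in the first stage, and it is precisely the stage you flag as the main obstacle. You propose to rigidify the local factor $F_p$ by ``comparing Euler products on the two sides of the functional equation'' for $\chi$ of conductor $p^r$. But for such $\chi$ one has $\chi(p)=0$, so $L(s,\chi)=\sum_n a_n\chi(n)n^{-s}$ has \emph{no} Euler factor at $p$ on either side of \eqref{Functional equation for twists}. The functional equation for $\Lambda_\chi$ therefore carries no direct information about $F_p$, and the asserted extraction of $\alpha_p,\beta_p$ and of $\varepsilon_\chi=\varepsilon\,\xi(p)\,\tau(\chi)^2/p$ has nothing to bite on. The non-vanishing hypothesis on $F_p$ in $\Re(s)\ge 1/2$ rules out zeros there, but by itself says nothing about the \emph{degree} of $F_p^{-1}$ as a power series in $p^{-s}$; to force degree $\le 2$ one needs global input.

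The paper's mechanism is quite different. It brings $F_q$ into play through the \emph{additive} twist $F(s,1/q)$, via the identity
\[
F(s,\tfrac{1}{q})=L_f(s)-\frac{q}{q-1}\,\frac{L_f(s)}{F_q(s)}+\frac{1}{q-1}\sum_{\chi\neq\chi_0}\tau(\bar\chi)\,F(s,\chi),
\]
so that $E_q(q^{-s})=F_q(s)^{-1}$ is expressed in terms of quantities whose growth can be controlled. The additive twist is handled by its Mellin transform, and the crucial estimate for $f(1/q+iy)$ as $y\to 0^+$ comes from the modular relation $f\,|\,H_N=\varepsilon_{\mathbf 1} i^k \bar f$ (equivalent to the untwisted functional equation), pushed through a word in $H_N$ and $P$. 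This yields $E_q(q^{-s})\ll_q q^{2|\sigma|}$ as $\sigma\to-\infty$, and then periodicity plus a zero-counting argument for $L_f$ forces $E_q$ to be a polynomial of degree $\le 2$. Your plan does not supply any substitute for this additive-twist/Mellin-transform step, and without it the converse theorem of \cite{booker2014weil} cannot be invoked, since that theorem \emph{assumes} the degree-$2$ local shape you are trying to prove.
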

%\begin{rem}
 %From our proof, we don't need information on $\sum_{j=0}^{\infty} a_{p^j} p^{-js}$ for all primes, it's enough to have it for primes $p|N$ and if $p_n$ is a arbitrary sequence of primes $ p_n \equiv 1 \pmod{N}$, then we only require it for one prime in this sequence. 
%\end{rem}
\begin{rem}
     If we suppose that for every $\chi$ of conductor $q \equiv 1 \pmod{N}$ that $\Lambda_{\chi} (s)$ is entire, then we don't require \eqref{Functional equation for twists} for  characters of conductor not congruent to $1 \pmod{N}$. If $\Lambda_{\chi}$ is not entire then we use this extra information in Lemma \ref{Twists are entire}. 
\end{rem}
\begin{rem}
A similar result should be true for L functions with gamma factors coming from Maass forms, but would be more difficult. It requires analysis of hypergeometric functions as in \cite{neururer2018weil}.
\end{rem}
From now on we shall assume the hypothesis of theorem \ref{Main thm}.
\section{Analysis of Euler factors.}
We shall assume for now that $\Lambda_{\mathbf{1}} (s)$ is entire and deal with the meromorphic case at the end. \\
By the conditions in theorem \ref{Main thm} have the following lemma to constrain the poles of $\Lambda_{\chi}$, shown in the proof of  \cite[ Theorem 1.1]{booker2014weil}.  
\begin{lem}{\label{Twists are entire}}
The function $\Lambda_{\chi} (s)$ is entire of finite order for every primitive character $\chi$ of prime conductor $q \nmid N$. 
\end{lem}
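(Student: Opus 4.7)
The plan is to follow the strategy in the proof of \cite[Theorem 1.1]{booker2014weil}: use Gauss-sum inversion to express the twisted L-functions in terms of additive twists of $L(s)= \sum_n a_n n^{-s}$, and then exploit the functional equation to bootstrap the controlled-pole information from $\Lambda_{\mathbf{1}}$ to each $\Lambda_\chi$.

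First I would set up the Gauss-sum decomposition. For $\chi$ primitive modulo the prime $q$, the identity $\bar\chi(n) = \tau(\chi)^{-1} \sum_{a \bmod q} \chi(a) e(an/q)$ (valid for $\gcd(n,q)=1$, with $\bar\chi(n)=0$ when $q \mid n$) yields
\[
L(s,\bar\chi) = \tau(\chi)^{-1} \sum_{a=1}^{q-1} \chi(a) F(s;a/q), \qquad F(s;\alpha) := \sum_{n=1}^\infty a_n e(n\alpha) n^{-s},
\]
because the contribution from $q \mid n$ drops out on summing $\chi(a)$ over $a$. Inverting via orthogonality then expresses each additive twist $F(s;b/q)$ with $\gcd(b,q)=1$ as a linear combination of $L(s)$ and the $L(s,\chi)$ for the nontrivial $\chi \bmod q$. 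Consequently the meromorphic continuations of the $F(s;a/q)$ are controlled by the poles of $L(s)$ (hence by the zeros of $P$, via $L(s) = \Lambda_{\mathbf{1}}(s)/\Gamma_\mathbb{C}(s+(k-1)/2)$ and the fact that $\Gamma_\mathbb{C}$ has no zeros) together with the hypothetical poles of the $\Lambda_\chi$.

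Next I would derive a functional equation for the additive twists. Inserting \eqref{Functional equation for twists} for each $\chi \bmod q$ into the inversion and reassembling Gauss sums, and using that the gamma factor is $\chi$-independent and $q$ is coprime to $N$, produces a Hurwitz-type functional equation relating $F(s;a/q)$ to $F(1-s;a'/q)$, where $a \mapsto a'$ is an involution of $(\mathbb{Z}/q\mathbb{Z})^\times$ (essentially multiplication by a unit depending on $N \bmod q$), with an explicit gamma-and-conductor ratio and an average of the $\varepsilon_\chi$'s as prefactor.

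Now suppose for contradiction that some $\Lambda_\chi$ has a pole at $s_0$. I would pair $F(s;a/q)$ with the Mellin transform of a smooth, compactly supported test function and shift the contour across the critical strip. Applying the functional equation from the previous step and re-expanding the resulting additive twist via the inversion of Step 1 produces an explicit-formula identity in which the only singular contributions come from the finitely many controlled poles of $L(s)$ together with the hypothetical residues of the $\Lambda_\chi$. Averaging over $a$ with $\bar\chi(a)$ isolates the residue of $\Lambda_\chi$ at $s_0$, and consistency of the identity for an arbitrary test function forces this residue to vanish. Hence $\Lambda_\chi$ is entire, and finite order then follows from Phragm\'en--Lindel\"of in the strip, using polynomial bounds on $\Re(s) > 1+\lambda$ from absolute convergence of the Dirichlet series and the reflected bound on $\Re(s) < -\lambda$ from \eqref{Functional equation for twists}.

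The main obstacle is the averaging step: arranging the test-function identity so that the contributions from the various $\Lambda_\chi$'s decouple and each residue can be pinned down separately. This is where the hypotheses genuinely enter — primality of $q$ ensures every nontrivial character mod $q$ is primitive and the Gauss-sum matrix is invertible, while $q \nmid N$ ensures the conductor $Nq^2$ is clean and that the involution $a \mapsto a'$ above is well-defined on $(\mathbb{Z}/q\mathbb{Z})^\times$.
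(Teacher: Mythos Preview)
Your proposal is aligned with the paper's treatment: the paper does not give its own argument for this lemma but simply cites \cite[Theorem~1.1]{booker2014weil}, which is precisely the reference you invoke, and your outline (Gauss-sum inversion to pass between multiplicative and additive twists, a Hurwitz-type functional equation for the latter, and a residue-isolation argument) captures the shape of that proof. Note that the paper states the lemma under the standing assumption that $\Lambda_{\mathbf{1}}$ is entire, so the ``controlled poles of $L(s)$'' in your Step~1 are in fact absent here; also be aware that in the present paper's setting the Euler factor $F_q(s)$ is not yet known to be a polynomial (that is Lemma~\ref{Euler Factors}, proved afterwards), so the principal-character piece $L(s)/F_q(s)$ appearing in the additive-twist decomposition must be handled without assuming a functional equation for it --- Booker's argument does accommodate this, but your Step~2 as written glosses over that point.
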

For $ \chi$ a primitive character of conductor $q \nmid N$, both $\Lambda_1 (s)$ and $\Lambda_{\chi}(s)$  are entire of finite order. The Phragm\'en-Lindel\"of convexity principle means they are bounded in vertical strips. \\
Let $\mathbb{H}= \{  z \in \mathbb{C}: \Im(z)>0     \}$ denote the upper half plane. Let $e(s):= e^{ 2 \pi i s}$ for $s \in \mathbb{C}$.  For $z \in \mathbb{H}$, set 
\[  f_n= a_n n^{\frac{k-1}{2}},  \; \; \; \; \;     f(z)=  \sum_{n=1}^{\infty} f_n  e(nz), \; \; \; \; \;                  \overline{f}(z) = \sum_{n=1}^{\infty} \overline{f_n}  e(nz)    .       \]
Recall the $k$-slash operator defined for any function $g: \mathbb{H} \rightarrow \mathbb{C}$ and any matrix $ \gamma= \begin{pmatrix}
a & b \\
c & d \\
\end{pmatrix}  $  of positive determinant by 
\[  (g| \gamma) (z) = (\det \gamma)^{k/2} (cz+d)^{-k} g\left(   \frac{az+b}{cz+d}  \right)      .    \]  For matrices $\gamma, \gamma'$, we write  $\gamma \simeq_{g} \gamma'$, if $ g|\gamma= g|\gamma'$. \\ 
The functional equation  \eqref{Functional equation for twists} for $\chi= \mathbf{1}$ and Hecke's argument (see Theorem 4.3.5 from \cite{miyake2006modular})  
implies that $ f| H_N = \varepsilon_1 i^k \overline{f}$ , where $ H_N= \begin{pmatrix}
0 & -1 \\
N & 0 \\
\end{pmatrix}   $. Let $ P=\begin{pmatrix}
1 & 1 \\
0 & 1 \\  \end{pmatrix} $ and $I_2$ be the identity matrix. Then since both $f$ and $\overline{f}$ are Fourier series, 
$P \simeq_{f} I_2$ and $P \simeq_{\overline{f}} I_2$.  \\

Consider the additive twist by $\alpha$, defined by $F(s,\alpha)= \sum_{n=1}^{\infty} a_n n^{-s} e(n\alpha)$ and $\overline{F} (s,\alpha)=  \sum_{n=1}^{\infty}  \overline{a_n} n^{-s} e(n\alpha)  $. Moreover denote $\F (s):= F(s,0)$ and $ L_{\overline{f}}( s):= \overline{ F} (s,0)$. Let $q  \equiv 1 \pmod{N} $ be a prime number. 
For $\chi$ a Dirichlet character mod $q$, the Gauss sum is defined by $ \tau(\chi)= \sum_{a=1}^{q} \chi(a) e( \frac{a}{q})$.
By Fourier analysis on $\mathbb{Z}/ q \mathbb{Z}$, 

 \[  e( \frac{n}{q}) =1 - \frac{q}{q-1}  \chi_0 (n) + \frac{1}{q-1} \sum_{ \substack{  \chi \Mod q  \\ 
 \chi \neq \chi_0   }     }    \tau (\overline{ \chi}) \chi(n) .  \]

 %\begin{proof}
 %The result is clearly true if $q |n$. Fix $n$ coprime to $q$ and define $g: Z/qZ \rightarrow \mathbb{C}$ by $g(a)= e\left(\frac{an}{q}\right)$. $\hat{g} (\chi)= \frac{1}{q-1} \sum_{a=0}^{q-1} e \left(\frac{an}{q} \right) \overline{ \chi} (a) $. By fourier inversion, 
 %$ e \left( \frac{n}{q} \right) =  g(1) = \sum_{ \chi \pmod q} \hat{g} (\chi)$. 
 %$\hat{g} (\chi_0)= \frac{1}{q-1} c_q(n)$ and the second term just comes from change of variables. 
%\end{proof}
 Multiplying  by $a_n n^{-s}$ and summing, we get the following relationship between additive twists and multiplicative twists.  
 \begin{equation}{\label{Twist in terms of non twists}}
      F(s, \frac{1}{q}) = \F(s) - \frac{q}{q-1} \frac{\F(s)}{F_q(s) } + \frac{1}{q-1} \sum_{ \substack{  \chi \Mod q  \\ 
 \chi \neq \chi_0   }     }  \tau(\overline{\chi}) F(s, \chi)   ,    
 \end{equation}  
 where $F_q(s) =  \sum_{j=0}^{\infty} a_{q^j} q^{-js} $ and $F(s, \chi)= \sum_{n=1}^{\infty} a_n \chi(n) n^{-s}$.  
 \\
We shall prove the following lemma.
\begin{lem}
\label{Euler Factors}
Let $q \equiv 1 \pmod{N}$ be a sufficiently large  prime number. Then the function $F_q(s)^{-1}$ is a polynomial in $q^{-s}$ of degree $\leq 2$. 
\end{lem}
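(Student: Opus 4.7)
The plan is to leverage the analyticity and non-vanishing of $F_q(s)$ on $\Re(s) \ge 1/2$, together with a functional equation for $F_q(s)^{-1}$ derived from the twisted functional equations, to bound the growth of $F_q(s)^{-1}$ in the left half plane and conclude via Liouville's theorem that $F_q(s)^{-1}$ is a polynomial in $q^{-s}$ of degree at most two.

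First, I would continue the partial $L$-function $L_f^{(q)}(s) := L_f(s)/F_q(s)$ to a meromorphic function on $\mathbb{C}$. Rearranging \eqref{Twist in terms of non twists} expresses $L_f^{(q)}(s)$ as a linear combination of $L_f(s)$ (entire of finite order by hypothesis), the primitive twists $L_f(s,\chi)$ (entire of finite order by Lemma~\ref{Twists are entire}), and the additive twist $F(s,1/q)$. The additive twist is extended by a Hecke--Mellin argument applied to the transformation $f|H_N = \varepsilon_1 i^k \overline{f}$: after writing $F(s,1/q)$ as a Mellin transform of $f(iy+1/q)$ and splitting at some $y_0$, the small-$y$ piece is rewritten using the matrix identity
\[
    \begin{pmatrix} 1 & 1/q \\ 0 & 1 \end{pmatrix} H_N \;=\; H_N \begin{pmatrix} 1 & 0 \\ -N/q & 1 \end{pmatrix},
\]
together with a further factorization of $\begin{pmatrix} 1 & 0 \\ -N/q & 1 \end{pmatrix}$ into $\mathrm{SL}_2(\mathbb{Z})$ and upper-triangular pieces enabled by $q \equiv 1 \pmod N$. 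This produces a functional equation for $F(s,1/q)$ of conductor $Nq^2$ that relates it to $\overline{F}(1-s,\alpha)$ for some rational $\alpha$.

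Second, applying the twisted functional equations to both sides of \eqref{Twist in terms of non twists} and then using $L_f(s) = F_q(s) L_f^{(q)}(s)$ and its conjugate $L_{\overline{f}}(1-s) = \overline{F_q}(1-s) L_{\overline{f}}^{(q)}(1-s)$, I would isolate a transformation law relating $F_q(s)$ and $\overline{F_q}(1-s)$. Combined with the non-vanishing of $F_q$ on $\Re(s)\ge 1/2$, this extends $F_q(s)^{-1}$ to a meromorphic function on $\mathbb{C}$. A Phragm\'en--Lindel\"of argument, with the conductor ratio $(Nq^2/N)^{1/2-s} = q^{1-2s}$ driving the growth, gives
\[
    |F_q(s)^{-1}| \ll (1+|s|)^A \, q^{-2\Re(s)} \qquad \text{as } \Re(s)\to-\infty.
\]
Writing $F_q(s)^{-1} = P(w)$ with $w = q^{-s}$ and $P(w) = \sum_{j\ge 0} b_j w^j$, this becomes $|P(w)| \ll (1+\log|w|)^A |w|^2$ as $|w|\to\infty$. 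Liouville's theorem applied to $(P(w)-b_0-b_1 w-b_2 w^2)/w^3$ then forces $P$ to be a polynomial of degree at most $2$.

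The main obstacle is producing the functional equation for the additive twist $F(s,1/q)$. The sole piece of modular data available is $f|H_N = \varepsilon_1 i^k \overline{f}$; bringing it to bear on a translation by $1/q$ requires the congruence $q \equiv 1 \pmod N$ to produce a sufficiently clean matrix factorization. The bookkeeping with Gauss sums $\tau(\overline{\chi})$ and the possibly unknown root numbers $\varepsilon_\chi$ in the resulting combined sum is also nontrivial, and is handled following the approach in \cite{booker2014weil}.
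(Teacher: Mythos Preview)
Your overall strategy---continue $F_q(s)^{-1}$ via \eqref{Twist in terms of non twists}, bound its growth as $\Re(s)\to-\infty$ using the modularity relation $f|H_N=\varepsilon_{\mathbf 1}i^k\overline f$ and the congruence $q\equiv 1\pmod N$, then apply Liouville---is the paper's strategy. But two steps in your execution are either off or missing.

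First, the ``transformation law relating $F_q(s)$ and $\overline{F_q}(1-s)$'' does not come out of \eqref{Twist in terms of non twists} in the way you suggest. The term $L_f(s)$ in \eqref{Twist in terms of non twists} has conductor $N$, while $F(s,\chi)$ and $F(s,1/q)$ have conductor $Nq^2$; the linear combination therefore has no single functional equation, and you cannot simply divide by the functional equation of $L_f$ to isolate one for $F_q$. The paper bypasses this: it bounds each ratio $F(s,1/q)/L_f(s)$ and $F(s,\chi)/L_f(s)$ directly as $\sigma\to-\infty$ (the first by estimating the Mellin integral of $f(1/q+iy)$ after applying the explicit matrix relation $\overline f|\gamma=\omega f$ with $\gamma=\left(\begin{smallmatrix}q-1&-1\\ Nq&-N\end{smallmatrix}\right)$, the second by the twisted functional equation), and reads off $E_q(q^{-s})\ll_q q^{2|\sigma|}$ from \eqref{Twist in terms of non twists}. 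No Phragm\'en--Lindel\"of is used here; periodicity in $t$ lets one restrict to a fixed bounded $t$-interval from the outset, which is why the paper gets a clean $|z|^2$ bound rather than your $(1+\log|z|)^A|z|^2$.

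Second, and more seriously, you never show that $P(w)=F_q(s)^{-1}$ is \emph{entire}. You write ``extends $F_q(s)^{-1}$ to a meromorphic function'' and then invoke Liouville, but Liouville needs holomorphy. This is exactly where the hypothesis ``$q$ sufficiently large'' enters, and it does not appear in your argument at all. The paper's point is that \eqref{Twist in terms of non twists} shows the only possible poles of $E_q(q^{-s})$ lie at zeros of $L_f(s)$; since $E_q(q^{-s})$ is $\frac{2\pi i}{\log q}$-periodic, a single pole in the strip $K<\Re(s)<\tfrac12$ would force $L_f$ to vanish at an entire arithmetic progression $s+\frac{2\pi i k}{\log q}$, hence infinitely many zeros in a bounded region. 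For $q$ large enough this contradicts the finiteness of the zero set of $L_f$ there, so $E_q$ is entire. Without this step your Liouville argument is unjustified.
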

Let $\Im(s)=t$.  Because $F_q(s)^{-1}$  is $ \frac{ 2 \pi i}{\log q}$-periodic, for the rest of this section we shall assume $t>0$ is contained in the interval $\left[ \frac{ 2 \pi }{\log q}, \frac{ 4 \pi }{\log q} \right] $.
\begin{proof}

By Mellin transformations, 
\begin{equation}
{\label{Mellin transformation}} \frac{      F(s, \frac{1}{q})}{  \F(s)}= \frac{\int_{0}^{\infty} y^{s +\frac{k-1}{2} -1} f(\frac{1}{q}+iy)  \; dy}{ (2 \pi)^{-s - \frac{k-1}{2}} \Gamma(s+ \frac{k-1}{2}) \F(s) }  .    \end{equation}
This equation will allow us to meromorphically continue $F(s, \frac{1}{q})$. 
\\

We want to estimate $ \int_{0}^{\infty} y^{s +\frac{k-1}{2}-1}  f(\frac{1}{q}+iy)   \; dy$ as $\Re(s) \rightarrow - \infty$. For $y>1$, we can use the trivial estimate 
\begin{align}
    \left| f(\frac{1}{q}+iy) \right|  \leq \sum_{n=1}^{\infty} |a_n| n^{\frac{k-1}{2}} e^{- 2 \pi n y}
    \ll \sum_{n=1}^{\infty} n^{{\frac{k-1}{2}}+ \lambda} e^{-2 \pi n y} \ll e^{- 2 \pi y}.
\end{align}
 When $y \leq 1$ we will use the modularity relation $ f| H_N = \varepsilon_ {\bold{1}} i^k \overline{f}$ and the fact $q \equiv 1 \pmod{N}$. Write $q=MN+1$ ,  where $M$ is an integer. Then 

 $ \overline{ f}| P^M H_N =  \frac{ \overline{\varepsilon_{\bold{1}}}}{  i^k}  f$ since $ H_N^2 \simeq_{f} I_2$. We observe  that, 

\[H_N P^{M} H_N =   \begin{pmatrix}
-N & 0 \\
MN^2 & -N 
\end{pmatrix}  \simeq_{f}  \begin{pmatrix}
1 & 0 \\
-MN & 1 
\end{pmatrix} .   \]
Also this means, 
\begin{equation*}{\label{modular with respect to gamma_{q,1}}}
    P^{-1} H_N P^M H_N      \simeq_{f}  \begin{pmatrix}
 q & -1 \\
1-q & 1 
\end{pmatrix} .      
\end{equation*}     
Hence we  get a relationship under the action of 
\[  H_N    P^{-1} H_N P^M H_N    \simeq_{\overline{f}}   \begin{pmatrix}
q-1 & -1 \\
Nq & -N 
\end{pmatrix}=:  \gamma  ,      \]
namely 
\begin{align*}
    \overline{f}| \gamma  =  \omega f
\end{align*}
for some $ \omega$ with $|\omega|=1$. This can be written explicitly in the form

\begin{equation*}{\label{modular relation for f}}
    f(z)=   \overline{\omega} N^{ - \frac{k}{2}} (qz-1)^{-k} \overline{f} \left( \frac{(q-1)z -1}{ Nqz-N}     \right)         .  
\end{equation*}   
 Using this identity, we have
\[ f(\frac{1}{q}+iy)  \ll_{q}  \left(\frac{1}{y}\right)^{k} \sum_{n=1}^{\infty} |a_n| n^{\frac{k-1}{2}} e^{         \frac{ - 2 \pi n} {  q^2 N y} \left( 1+ O(y) \right)  }   \]
for $y \leq 1$  because 
\[      \frac{(q-1) ( \frac{1}{q} +iy)  -1}{ Nq ( \frac{1}{q} +iy  )-N} = \frac{-1}{ q^2 N (iy)} \left( 1 + O(y) \right)              \] 
for $y \leq 1$. Note that 
\[       \sum_{n=1}^{\infty} |a_n| n^{ \frac{k-1}{2}} e^{     \frac{ - 2 \pi n} {  q^2 N y} \left( 1 +O(y) \right)   } \ll     e^{ \frac{ -  2 \pi }{ q^2 N y}   }    .        \]
 Let $s= \sigma+it$ where $\sigma< -\frac{k-1}{2}$. Then 
\begin{align}{\label{integral estimate for f}}
    \int_{0}^{\infty} y^{s+\frac{k-1}{2}-1-k}  f(\frac{1}{q}+iy)  \; dy & \ll_{q}   \int_{0}^{1} y^{\sigma + \frac{k-1}{2} -1  }   e^{ \frac{ - 2 \pi  }{q^2 N y}   } \; dy + \int_{1}^{\infty}   y^{\sigma + \frac{k-1}{2} -1 } e^{- 2  \pi y} \; dy  \nonumber \\
        & \ll_q   \left( \frac{q^2 N}{ 2 \pi}  \right)^{| \sigma|}  \int_{\frac{2 \pi}{q^2 N}}^{\infty} y^{ -\sigma + \frac{k+1}{2} +1} e^{-y} \; dy                      +   \frac{ 1  }{ \left| \sigma + \frac{k-1}{2} \right|}   \nonumber \\ 
                                     \\& \ll_q  \left( \frac{q^2 N}{ 2  \pi}  \right)^{| \sigma|} \Gamma\left( \left| - \sigma + \frac{k+1}{2} \right| \right)     +    \frac{ 1  }{ \left| \sigma + \frac{k-1}{2} \right|}  \nonumber . \end{align} 
 The second term converges to zero as $\sigma \rightarrow -\infty$.
 By Stirlings formula 
 \[ \left| \Gamma(- \sigma + \frac{k+1}{2}) \right| \ll %|\sigma|^{\mu- \frac{1}{2}}
 \left( \frac{|\sigma|}{e} \right)^{|\sigma| +\frac{k}{2}}    .\] 
     By the functional equation \eqref{Functional equation for twists} for $\chi= \mathbf{1}$,  as $\sigma \rightarrow -\infty$
     \begin{align}{\label{numerator of additive twist}}  \left| \F(s) \Gamma(s+\frac{k-1}{2}) (2 \pi)^{-s} \right| &= \left| \varepsilon_{\bold{1}} \frac{\sqrt{N}}{ 2 \pi} \left( \frac{N}{2\pi}  \right)^{-s}  \Gamma(1-s +\frac{k-1}{2}) L_{\overline{f}}(1 - s)  \right| \nonumber \\
     & \gg _q \left(\frac{N}{ 2 \pi} \right)^{|\sigma|}    
      \left(\frac{|\sigma|}{e}    \right)^{|\sigma| +\frac{k}{2}},
     \end{align}
    where we use the fact $t$ is contained in the interval $\left[ \frac{ 2 \pi }{\log q}, \frac{ 4 \pi }{\log q} \right] $   . \\
   From \eqref{Mellin transformation}, \eqref{integral estimate for f} and \eqref{numerator of additive twist}, as $\sigma \rightarrow - \infty$
    \[      \frac{F(s, \frac{1}{q})}{\F(s)} \ll_q q^{2 |\sigma|} .  \]
    \\ 
    Moreover, by  the functional equation \eqref{Functional equation for twists}, 
    \[  \frac{F(s,\chi)}{\F(s)} = \frac{\Gamma_{\mathbb{C}} \left(s+ \frac{k-1}{2} \right) F(s,\chi) }{    \Gamma_{\mathbb{C}} \left(s+ \frac{k-1}{2} \right) \F(s)  }    \ll q^{2 |\sigma|} ,         \]
    as $\sigma \rightarrow -\infty$. 
%Let $     \Im(s)=t$ be contained in an interval $ A \leq t \leq B $ where $A$ is sufficiently large. The calculations from lemma 2.1 of \cite{kaczorowski2012twists} and the functional equation  for $\chi \neq \chi_0$ and Stirlings formula, 
 %\[   F(s, \chi) \ll |\sigma|^{2 |\sigma|}  \left(  \frac{q^2 N}{ (2 \pi e)^2}    \right)^{|\sigma|}   |\sigma|^C   ,   \]
%uniformly in $t$ as $\sigma \rightarrow - \infty$. Similarly 
%\[        F(s) \gg        |\sigma|^{2 |\sigma|}  \left(  \frac{ N}{ (2 \pi e)^2}    \right)^{|\sigma|}   |\sigma|^C       ,  \]
%uniformly in $t$ as $\sigma \rightarrow - \infty$ .  \\ \\

 Let $E_q(q^{-s})= \frac{1}{F_q(s)}$. Since $F_q(s)= \sum_{j=0}^{\infty} a_{q^j} q^{-js}$, we have a power series expansion for $E_q(z)$. By the hypotheses on $F_q(s)$ the radius of convergence of $E_q(z)$ is at least $q^{-1/2}$. 
 To prove  lemma \ref{Euler Factors}, we need to show that $E_q(z)$ is a polynomial of degree $\leq 2$.  We know
%For $\sigma < \frac{1}{2 }$ we shall use the argument from the proof of lemma 5 in \cite{On}. In view of $t$ periodicity it is enough to show that $E(q^{-s})$ is holomorphic on the set 
 %\[D_{K, \delta}:= \{  s \in \mathbb{C}: -K <\sigma< \frac{1}{2}- \delta, 0< t< \frac{K}{2}     \}     \]
 %for any $\delta>0$ where $K$ is arbitrarily large. Fix $\delta>0$.  
 \[  F(s, \frac{1}{q})= \frac{  (2 \pi)^ {  \frac{k-1}{2}} \int_{0}^{\infty} y^{s + \frac{k-1}{2} -1}  f( \frac{1}{q}+iy) \; dy }{(2 \pi)^{-s} \Gamma(s+\frac{k-1}{2})   }    \]
 is entire  thanks to the first line of  \eqref{integral estimate for f} and since $\Gamma(s)$ has no zeroes. By \eqref{Twist in terms of non twists}, and our estimates above, $E_q(q^{-s})$ has a meromorphic continuation to $\mathbb{C}$, and satisfies

 \begin{equation}{\label{Estimate for small}}
     E_q(q^{-s}) \ll_q q^{2 |\sigma| } 
 \end{equation}     
  uniformly in $t$ as $\sigma \rightarrow -\infty$. Thus 
 
  \[ E_q(z) \ll_q |z|^{2}     \]
 for $|z|$ sufficiently large. Once we have shown $E_q(z)$ is entire we have finished the proof of the lemma.   \\ \\
 
 We shall show $E_q(q^{-s})$ is entire if $q$ is sufficiently large. \\ 
  Thanks to our assumption on $F_q(s)$ in Theorem \ref{Main thm}, $E_q(q^{-s})$ is analytic for $\Re(s) \geq 1/2$. Also, the estimate \eqref{Estimate for small} means it is also analytic in some left half plane. By $t$-periodicity, for large $q$ it suffices to show $E_q(q^{-s})$ is analytic in some region of the form 
  \[ D_K:= \{ s \in \mathbb{C}: \Re(s) \in (K, \frac{1}{2}), \Im(s) \in [0,1]                  \}                   ,    \]
  where $K< \frac{1}{2}$. By Lemma \ref{Twists are entire}, $F(s,\chi)$ is entire so the equation 
 \[ E_q(q^{-s})= \frac{q-1}{q}\left( 1 - \frac{F(s, \frac{1}{q})}{\F(s)} + \frac{1}{q-1} \sum_{ \substack{  \chi \Mod q  \\ 
 \chi \neq \chi_0   }     }  \tau(\overline{\chi})  \frac{F(s, \chi)}{\F(s)}                \right)         \]
implies the only possible  poles of $E_q(q^{-s})$ must come from zeroes of $\F$. There are only finitely many zeroes of $\F$ in the region $D_K$. By $t$ periodicity, if $E_q(q^{-s})$ has a pole at $s$ in  $D_K$, then $\F$ must also have zeroes at $s+ i \frac{ 2 \pi k }{\log q} $, for $k \in \mathbb{Z}$. For large enough $q$ we would have too many zeroes of $\F$ in the region $D_K$. Hence for large $q$, $E_q(q^{-s})$ cannot have any poles in $D_K$ as required.

%Let 
 %N_F( \sigma, T)= \left|  \{   \rho: F(\rho)=0, \Re(\rho)> \sigma, |\Im \rho|  \leq T   %\}     \right|      $. For any $\sigma >\frac{1}{2}+\delta$, as $F$ has degree 2, for every $\sigma>1/2$ $N_F(\sigma,T)=o(T)$. By the functional equation,  $F$ has $o(K)$ zeroes in  
 %$D_{K,\delta}$. By $t$-periodicity, if $E(q^{-s})$ has a pole it must have $ \gg K$ poles, giving a contradiction if $K$ is sufficiently large. 
 \end{proof}
\section{Analysis of the coefficients in the Euler factors}
We analyse the Euler factors using arguments from \cite{Converse}. \\
Let $q \equiv 1 \pmod{N}$ be a sufficiently large prime number ( as in lemma \ref{Euler Factors}), then $F_q(s)^{-1}$ can be expressed in the form  $ 1 - \lambda q^{-s} + \mu q^{-2s}$ where $\lambda= a_q$, $\mu=  a_q^2-a_{q^2}$.
Define
\[      \epsilon = \begin{cases}  \lambda/ \overline{\lambda} \; \; \; \text{if} \; \; \; \lambda \neq 0 ,\\
\frac{\mu}{|\mu|} \;  \; \; \text{if}   \; \; \; \lambda=0  , \mu \neq 0  ,\\
1  \;  \; \; \text{if}   \; \; \;   \lambda =0, \mu =0  .
\end{cases}
\] 
\begin{rem}We shall show that the third case can't happen so our choice here isn't important.
\end{rem} 

Let $r=1- \epsilon \overline{\mu}$, then $D_q(s)= r+q -1 -q( 1-\lambda q^{-s}+ \mu q^{-2s})$ satisfies a functional equation of the form $D_q(s)= \epsilon q^{1-2s} \overline{D_q(1-\overline{s})}$. Define $\Lambda_{c_q+r} (s)= \Lambda_{c_q} (s)+ r \Lambda_1(s)$, where $ \Lambda_{c_q} (s)= \Gamma_{\mathbb{C}} (s+ \frac{k-1}{2} )  \sum_{n=1}^{\infty} a_n c_q(n) n^{-s}$ and $c_q(n) =\sum_{ \substack{ a \pmod q  \\ (a,q)=1       } } e(\frac{an}{q})   $ is a Ramanujan sum.  \\ \\
Since $c_q(n)=q-1-q \chi_0 (n)$ by Fourier analysis on $\mathbb{Z}/q\mathbb{Z}$, 
  \begin{equation*}
       D_q(s)= \frac{\Lambda_{c_q+r} (s)}{\Lambda_{1} (s)}.
  \end{equation*}
  Hence we have the following relationship,
  \begin{equation}{\label{Functional equation for c_q}}
      \Lambda_{c_q+r}(s) = \epsilon \epsilon _1  (Nq^2)^{\frac{1}{2}-s} \overline{ \Lambda_{c_q+r}(1- \overline{s})   }.
  \end{equation}

For $q \equiv 1 \pmod{N}$ and $\chi$ a character mod $q$, define 
\[  f_{\chi} =  \sum_{ \substack{ a \Mod{q}  \\ (a,q)=1       } } \overline{\chi(a)} 
 f \bigg |  \begin{pmatrix}
 1 & \frac{a}{q} \\
0 & 1    
\end{pmatrix} + \mathbbm{1} _{ \chi=\chi_0} rf    \; \; \text{and}  \; \; \overline{f}_{\overline{\chi}} = \sum_{ \substack{ a \Mod{q}  \\ (a,q)=1       } } \chi(a) 
\overline{ f} \bigg |  \begin{pmatrix}
 1 & \frac{a}{q} \\
0 & 1    
                \end{pmatrix}  + \mathbbm{1}_{\chi=\chi_0} \overline{r} \overline{f}.       \]
Also define
 \[  C_{\chi} = \begin{cases}
 \overline{\epsilon}    \; \; \; \; \text{if} \;   \chi   \; \text{is trivial},\\ 
\chi(-N) \epsilon_1 \overline{  \epsilon_{\chi} \tau(\overline{ \chi})/\tau(\chi)    } \; \; \; \text{otherwise} .  \\

\end{cases}              \]
By substituting the Fourier expansion of $f$, we see $f_{\chi}$ has $n$th Fourier coefficient 
 \[        \begin{cases}    
 f_n( c_q(n)+r )  \; \; \; \; \text{if} \;   \chi   \; \text{is trivial},\\ 
  f_n \tau(\overline{\chi}) \chi(n) \; \; \; \text{otherwise},
 \end{cases}\]
 with a similar expression for $f_{\overline{\chi}}$.
  By  \eqref{Functional equation for c_q} and  the functional equation \eqref{Functional equation for twists} ,  Hecke's argument  implies we have the  modularity relationship
\begin{equation} {\label{Modular relationship for twists}}
 f _{\chi} \bigg| \begin{pmatrix}
 0 & -1 \\
Nq^2 & 0   
\end{pmatrix} = i^k \chi(-N) \epsilon_{\bold{1}}  \overline{C}_{\chi} \overline{f}_{ \overline{\chi}}. \end{equation} 
                If $\gamma, \gamma' \in \Gamma_0 (N)$ have the same top row, then it is easy to check that $ \gamma' \gamma ^{-1}$ is a power of $ \begin{pmatrix}
1 & 0 \\
N & 1 \\
\end{pmatrix}   $ .
 As $ \begin{pmatrix}
1 & 0 \\
N & 1 \\
\end{pmatrix} = H_N P^{-1} H_N^{-1} $  ,  $ f| \gamma$ depends only on the top row of $\gamma$. Let $\gamma_{q,a}$ denote any element of $\Gamma_0 (N)$ with top row $(q, -a)$. \\
Let $\gamma= \begin{pmatrix}
 q  & -b \\
 -Nm & r
\end{pmatrix}$ be an arbitrary element of $\Gamma_1(N)$. If $m=0$, then $\gamma$ is (up to sign if $N \leq 2$) a power of $P$, so $f|\gamma=f$. Otherwise, multiplying $\gamma$ on the left by $P^{-j}$ leaves $f|\gamma$ unchanged and replaces $q$ by $q+jmN$. By Dirichlet's theorem, we may assume that $q \equiv 1\pmod{N} $ is a prime and is large enough so Lemma \ref{Euler Factors} holds.   \\ \\
                
                    Equation \eqref{Modular relationship for twists}  implies the following.  
                \begin{align}{\label{Modularity on average}}
                & \sum_{ \substack{ a \Mod{q}  \\ (a,q)=1       } } C_{\chi} \overline{\chi(a)} 
 f \bigg |  \begin{pmatrix}
 1 & \frac{a}{q} \\
0 & 1    
                \end{pmatrix} + \mathbbm{1} _{\chi= \chi_0} \overline{ \epsilon} rf = C_{\chi} f_{\chi} = i^k \epsilon_{\bold{1}} \overline{f}_{\overline{\chi}} \bigg| \begin{pmatrix}
 0 & -1 \\
Nq^2 & 0   
\end{pmatrix}^{-1}  \nonumber \\
&= i^k \epsilon_{\bold{1}} \left(        \sum_{ \substack{ m \Mod q  \\ (m,q)=1       } }  \chi(-Nm) \overline{f}   \bigg|  \begin{pmatrix}
 1 & \frac{m}{q} \\
0 & 1   
\end{pmatrix}         \begin{pmatrix}
 0 & -1 \\
Nq^2 & 0   
\end{pmatrix}^{-1}     +   \mathbbm{1}_{ \chi= \chi_0} \overline{r} \overline{f}   \bigg|  \begin{pmatrix}
 0 & -1 \\
Nq^2 & 0   
\end{pmatrix}^{-1}    \right)   \nonumber   \\
&= \sum_{ \substack{ m \Mod q  \\ (m,q)=1       } } \chi(-Nm) f\bigg| 
\begin{pmatrix}
 0 & -1 \\
 N & 0 
\end{pmatrix}
\begin{pmatrix}
 1 & \frac{m}{q} \\
0 & 1   
\end{pmatrix}         \begin{pmatrix}
 0 & -1 \\
Nq^2 & 0   
\end{pmatrix}^{-1}     + \ind \overline{r} f \bigg |  \begin{pmatrix}
 q^2 & 0 \\
0 & 1    
                \end{pmatrix} \nonumber \\ 
&= \sum_{ \substack{ a \Mod q  \\ (a,q)=1       } } \overline{\chi(a)} f\bigg|  \gamma_{q,a} \begin{pmatrix}
 1 & \frac{a}{q} \\
0 & 1    
                \end{pmatrix} + \ind \overline{r} f \bigg |  \begin{pmatrix}
 q^2 & 0 \\
0 & 1    
                \end{pmatrix}  .
                \end{align}
                 Fix a residue $b$ coprime to $q$. 
                By orthogonality of Dirichlet characters and \eqref{Modularity on average}, 
                \begin{align*}  f\bigg|  \gamma_{q,b} \begin{pmatrix}
 1 & \frac{b}{q} \\
0 & 1    
                \end{pmatrix}&= \frac{1}{ \varphi(q)} \sum_{ \chi \Mod{q}} \chi(b) 
                \sum_{ \substack{ a \Mod q  \\ (a,q)=1       } } \overline{\chi(a)} f\bigg|   \gamma_{q,a} \begin{pmatrix}
 1 & \frac{a}{q} \\
0 & 1    
                \end{pmatrix}   \\ 
&= \frac{1}{\varphi(q)} \left( \sum_{ \chi \Mod q} \chi(b) C_{\chi}
                \sum_{ \substack{ a \Mod q  \\ (a,q)=1       } } \overline{\chi(a)} f\bigg|   \begin{pmatrix}
 1 & \frac{a}{q} \\
0 & 1    
                \end{pmatrix}   +    \overline{\epsilon} rf - \overline{r} f \bigg | 
             \begin{pmatrix}
 q^2 & 0 \\
0 & 1    
                \end{pmatrix}      \right)  .
                \end{align*}
                Replacing $a$ by $ab$ on the right hand side and writing 
                \[  \widehat{C_q}(a)=  \begin{cases}
                 \frac{1}{\varphi(q)} \sum_{ \chi \Mod{q}} C_{\chi} \overline{\chi(a)}  \; \text{if} \; \; (a,q)=1 , \\
                 \frac{\overline{\epsilon}r}{\varphi(q)} \; \; \text{otherwise},
                \end{cases}
                     \]
                     we obtain
                \begin{equation}{\label{formula for f|gamma}} f \bigg | \gamma_{q,b} =     \sum_{ a=0}^{q-1}  \widehat{C_q} (a)   f \bigg |   \begin{pmatrix}
 1 & \frac{(a-1)b}{q} \\
0 & 1    
                \end{pmatrix}    - \frac{\overline{r}}{\varphi(q)} f \bigg|  \begin{pmatrix}
 q^2 & -bq \\
0 & 1    
                \end{pmatrix}   .  \end{equation}
             Let 
                \[  S_q(x) =   \sum_{a=0}^{q-1}  \widehat{C_q} (a) e \left( \frac{(a-1)x}{q}    \right)           .    \]  From \eqref{formula for f|gamma}, the $n$th  Fourier coefficient of $f \bigg | \gamma_{q,b}$ is   $f_n S_q(bn)  - \frac{\overline{r}}{ \varphi(q)}q^k  \mathbbm{1} _{q^2 \nmid n}  f_{n/q^2}$. 
                \\
                 
                                We shall use the following Lemma. \begin{lem}{\label{lemma 2.3}}
                                Let $q  \equiv 1 \pmod{N}$ be a sufficiently large prime so Lemma \ref{Euler Factors} holds . For any $a$ such that $(a,q)=1$, there exists an $n \equiv a \pmod{q}$ such that $f_n \neq 0$. 
                                \end{lem}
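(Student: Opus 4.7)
The plan is to argue by contradiction: suppose $a_n = 0$ for every $n \equiv a \pmod q$, and derive an incompatibility with the non-vanishing of $F_q$ on $\Re(s) \geq \tfrac{1}{2}$ via the functional equations.

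First I will translate the assumption into an identity of completed $L$-functions. By orthogonality of Dirichlet characters mod $q$,
\[
0 \;=\; \varphi(q) \sum_{n \equiv a \pmod q} a_n n^{-s} \;=\; \sum_{\chi \pmod q} \overline{\chi(a)} \, F(s,\chi).
\]
Multiplying by $\Gamma_{\mathbb{C}}(s + \tfrac{k-1}{2})$ and using $F(s, \chi_0) = L_f(s)/F_q(s)$ (from the multiplicativity of $\{a_n\}$), this becomes
\[
P(s) \Lambda_{\mathbf{1}}(s) \;+\; \sum_{\chi \neq \chi_0} \overline{\chi(a)} \Lambda_{\chi}(s) \;=\; 0,
\]
where $P(s) := F_q(s)^{-1} = 1 - a_q q^{-s} + \mu q^{-2s}$ with $\mu = a_q^2 - a_{q^2}$ from Lemma \ref{Euler Factors}.

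Next I will introduce the set $S \subseteq (\mathbb{Z}/q\mathbb{Z})^{*}$ of residues $c$ for which some $n \equiv c \pmod q$ has $a_n \neq 0$. Multiplicativity of $\{a_n\}$ together with Dirichlet's theorem (used to produce coprime witnesses for products of residues in $S$) should force $S$ to be a subgroup of $(\mathbb{Z}/q\mathbb{Z})^{*}$ containing $1$. Under the contradiction hypothesis $a \notin S$, so $S$ is proper, and there exists a non-trivial Dirichlet character $\chi^{*}$ modulo $q$ that is trivial on $S$. Since $\chi^{*}(n) = 1$ for every $n$ with $a_n \neq 0$, this yields $F(s, \chi^{*}) = F(s, \chi_0)$, equivalently $\Lambda_{\chi^{*}}(s) = P(s) \Lambda_{\mathbf{1}}(s)$.

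Applying the functional equations of $\Lambda_{\chi^{*}}$ and $\Lambda_{\mathbf{1}}$ to this identity and cancelling $\Lambda_{\mathbf{1}}$ (nonzero since $a_1 = 1$) yields the polynomial functional equation
\[
P(s) \;=\; \eta \, q^{\,1-2s} \, \overline{P(1 - \overline{s})}, \qquad \eta := \varepsilon_{\chi^{*}}/\varepsilon_{\mathbf{1}}.
\]
Matching coefficients of $1$, $q^{-s}$, and $q^{-2s}$ on both sides forces $\eta \overline{\mu} = q$ and $\mu = \eta q$, hence $|\mu| = q$; if instead $\mu = 0$, comparing constant terms gives the impossible $1 = 0$. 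But analyticity of $F_q = 1/P$ on $\Re(s) \geq \tfrac{1}{2}$ forces each root $X$ of $1 - a_q X + \mu X^{2}$ to satisfy $|X| > q^{-1/2}$, so $|\mu| = 1/|X_1 X_2| < q$. This contradiction gives $S = (\mathbb{Z}/q\mathbb{Z})^{*}$, hence $a \in S$, proving the lemma. The main obstacle will be verifying that $S$ is a subgroup: given witnesses $n_1, n_2$ for residues $c_1, c_2 \in S$, producing $n_1 n_2$ as a witness for $c_1 c_2$ requires $(n_1, n_2) = 1$ in order to apply multiplicativity, and achieving coprimality by multiplying a witness by auxiliary primes $\ell \equiv 1 \pmod q$ with $a_\ell \neq 0$ is where Dirichlet's theorem, together with some control over the support of $\{a_n\}$, must be invoked carefully.
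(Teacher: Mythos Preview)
Your endgame is sound: if a nontrivial primitive $\chi^*\pmod q$ with $\Lambda_{\chi^*}=P\,\Lambda_{\mathbf 1}$ exists, then the two functional equations do combine to give $P(s)=\eta\,q^{1-2s}\,\overline{P(1-\bar s)}$, and matching coefficients forces $|\mu|=q$, contradicting the analyticity of $F_q$ on $\Re(s)\ge\tfrac12$. The difficulty is producing $\chi^*$. For that you need the support set $S\subset(\mathbb Z/q\mathbb Z)^\times$ to be a subgroup, but multiplicativity only yields $a_{n_1n_2}=a_{n_1}a_{n_2}$ when $(n_1,n_2)=1$, so closure of $S$ under products requires \emph{coprime} witnesses for each pair $c_1,c_2\in S$. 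Your suggested remedy---adjoining auxiliary primes $\ell\equiv 1\pmod q$ with $a_\ell\ne 0$---does not remove a shared prime factor from two given witnesses (it only inserts a new prime), and at this point in the argument nothing guarantees that even one such $\ell$ exists. Multiplicative sequences with sparse support show that, absent further input from the functional equations, $S$ need not be multiplicatively closed; so the implication ``$a\notin S\Rightarrow\langle S\rangle$ proper'' is a genuine gap, not a routine detail. Note also that your opening identity $P\Lambda_{\mathbf 1}+\sum_{\chi\ne\chi_0}\overline{\chi(a)}\Lambda_\chi=0$ is never actually used in your argument; the term $P\Lambda_{\mathbf 1}$ has no functional equation of conductor $Nq^2$, which is precisely the obstruction your subgroup manoeuvre is trying to circumvent.

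The paper sidesteps this entirely. It keeps all characters in play and uses the auxiliary function $\Lambda_{c_q+r}$, constructed in Section~3 so that $D_q(s)=\Lambda_{c_q+r}(s)/\Lambda_{\mathbf 1}(s)=r+q-1-qP(s)$ is itself a degree-two polynomial in $q^{-s}$ satisfying a functional equation of conductor $Nq^2$. The vanishing hypothesis becomes
\[
(q-1+r)\,\Lambda_{\mathbf 1}(s)=\Lambda_{c_q+r}(s)-q\sum_{\chi\ne\chi_0}\overline{\chi(a)}\,\Lambda_\chi(s);
\]
applying the functional equations term by term and then comparing Dirichlet coefficients at $n=q$ and $n=q^2$ forces first $a_q=0$ (else $|\mu|=q$) and then $|\mu|=q$ regardless, the same contradiction you reach. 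No structural claim about $S$ is required.
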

                                
                                 \begin{proof}{ Proof of lemma \ref{lemma 2.3}.} \\
                 
             Let $ (a,q)=1$ and suppose there does not exist any $n \equiv a \pmod q$ such that $a_n \neq 0$.  Let $\chi_0$ be the trivial character mod $q$. By Fourier analysis, $\chi_0(n)= \frac{q-1 - c_q(n)}{q}$, so
                \[ \mathbbm{1}_ {n \equiv a}= \frac{ 1}{q}- \frac{c_q(n)+r}{q(q-1)} + \frac{r}{q(q-1)} + \frac{1}{q-1} \sum_{ \substack{ \chi \Mod q \\ \chi \neq \chi_0}} \overline{\chi(a)} \chi(n)   .  \]
                Multiplying by $a_n n^{-s}$ and summing implies
                \[  \left(  q-1+r    \right)    \Lambda_1(s)
                =    \Lambda_{c_q+r} (s) - q   \sum_{ \substack{ \chi \Mod q \\ \chi \neq \chi_0}} \overline{\chi(a)} \Lambda_{\chi} (s)    .    \]
                Using the functional equations \eqref{Functional equation for twists} and \eqref{Functional equation for c_q}, 
                \[ (q-1+r) \epsilon_{\bold{1}} N^{1/2-s} \overline{ \Lambda_1 (1- \overline{s})}      = \epsilon \epsilon_{\bold{1}} (Nq^2)^{1/2-s} \overline{ \Lambda_{c_q+r} (1- \overline{s})   }-  q  (Nq^2)^{1/2 -s}   \sum_{ \substack{ \chi \Mod q \\ \chi \neq \chi_0}} \epsilon_{\chi} \overline{\chi(a)} \; \; \overline{\Lambda_{\chi} (1- \overline{s})   }  .   \]
                Multiplying by $\overline{\epsilon_{\bold{1}}} (Nq^2)^{1/2-s}$, replace $s$ by $1- \overline{s}$ and conjugating, we get
                \[   (q-1+ \overline{r}) q^{1-2s} \Lambda_1 (s) = \overline{\epsilon} \Lambda_{c_q+r} (s) - q    \sum_{ \substack{ \chi \Mod q \\ \chi \neq \chi_0}} \chi(a) \epsilon_{\bold{1}} \overline{ \epsilon_{\chi}} \Lambda_{\chi} (s).                  \]
                 Comparing Dirichlet coefficients at $q$, either $a_q=0$ or $q-1+r=0$. The second equation 
                 implies $|\mu|=q$, which cannot happen as $F_q(s)$ has no poles for $\Re(s)\geq 1/2$. 
                 Comparing at $q^2$ now, $q(q-1+\overline{r}) = \overline{\epsilon} ( q-1+r) a_{q^2}$. Since $a_q=0$, $a_{q^2}=- \mu$, so this equation leads again to $|\mu|=q$, again giving a contradiction. 
               \end{proof}
                                 Since $q \equiv 1 \pmod{N}$, \eqref{modular with respect to gamma_{q,1}}  implies
                 $f|\gamma_{q,1}=f$. 
                           Using Lemma \ref{lemma 2.3} and equating Fourier coefficients of $f|\gamma_{q,1}$ and $f$  implies  $S_q(x)=1$ for all $x$ coprime to $q$.
                                For $n$ such that $q \nmid  n $, the $n$th Fourier coefficient of $f|\gamma_{q,b}$ is $f_n$. If $q|n$, the Fourier coefficient of $f|\gamma_{q,b}$ is independent of $b$, so $f|\gamma_{q,b}$ has the same $n$th Fourier coefficient as $f|\gamma_{q,1}$. Hence $f|\gamma_{q,b}=f$ for all $b$ coprime to $q$. It is also easily shown in  the case $b \equiv 0  \pmod{q}$, so  $f \in M_k ( \Gamma_1(N)) $. 
                                
                                \begin{rem}
                               For our proof we only need the fact $f \in M_k ( \Gamma_1(N)) $.  The above calculations imply that in fact $r=0$. \\               
                        
                                Notice that by definition $ S_q(0)=  \overline{ \epsilon } \left( \frac{  r}{ \varphi(q)} +1  \right)          $.
                        Using Fourier analysis on $\mathbb{Z}/ q \mathbb{Z}$
                            
                            \[ \widehat{C_q}(a+1)=  \frac{1 }{q} \sum_{x=0}^{q-1} S_q(-x) e( ax/q) =   \mathbbm{1} _{a=0} +   \frac{ S_q(0)-1}{q} .\] 
                            Hence $\widehat{C}_q(0)= \frac{ \overline{ \epsilon } r}{ \varphi(q)} = \frac{S_q(0)-1}{q}$.
                              Suppose $ \lambda \neq 0$, so $f_q \neq 0$. Then comparing   the $q$th Fourier coefficient of $f|\gamma_{q.1}$ and $f$ implies  $ S_q(0)=1 $, i.e $r=0$. The fact $S_q(0)=1, r=0$ also shows $\epsilon=1$ and  $ |\mu|=1$,    so $\lambda$ is real in this case.
                              \\
                              Now suppose $\lambda=0$. Then $f_{q^2} = -\mu q^{k-1}$, $f_1=1$. Comparing coefficients at $q^2$ now, 
                              \[        \mu q^{k-1}= \mu q^{k-1} \left( \frac{ \overline{ \epsilon} r q }{ \varphi(q)}+1      \right)  + \frac{\overline{r} q^k}{ \varphi(q)} ,                       \]
                               which implies $|\mu|=1$. Hence $ \epsilon= \mu$, so $r=0$ again. 
                        \end{rem}
                
                 \section{Proof of theorem \ref{Main thm}.}
                 \begin{proof}
                 We use the following argument from \cite{Converse}. 
                Let $C$ denote the set of normalized Hecke eigenforms of weight $k$ and conductor dividing $N$, and for $g \in C$ with Fourier expansion $\sum_{n=1}^{\infty} g_n e(nz)$, let $L_g(s) = \sum_{n=1}^{\infty} g_n n^{-s - \frac{k-1}{2}}$. \\
                
                  %Notice that \[    \Lambda_{\mathbbf{1}} (s) = \Gamma_{\mathbb{C}} (s+ \frac{k-1}{2} ) \sum_{n=1}^{\infty} f_n n^{-s - \frac{k-1}{2}} .           \]
                  Let $X$ denote the set of pairs $(\xi_1, \xi_2)$ where $\xi_1 \pmod{N_1}$, $\xi_2 \pmod{N_2}$ are primitive Dirichlet characters such that $N_1N_2 \mid N$, $\xi_1(-1) \xi_2 (-1)=(-1)^k$ and if $k=1$ then $\xi_1(-1)=1$. Let 
                  \begin{equation*} L_{\xi_1, \xi_2}(s)= L(s+ \frac{k-1}{2} , \xi_1) L(s- \frac{k-1}{2}, \xi_2)          \end{equation*}
                  where the factors on the right are the usual Dirichlet L functions. \\ Since $f \in M_k (\Gamma_1(N))$, by newform theory and the description of Eisenstein series in \cite[Chapter~4.7]{miyake2006modular}, there are Dirichlet polynomials $D_{\xi_1, \xi_2}$ and $D_g$ such that 
                  \begin{equation}{\label{Newform}} L_f(s)= \sum_{(\xi_1, \xi_2) \in X}  D_{\xi_1, \xi_2} L_{\xi_1, \xi_2}(s) + \sum_{ g \in C}  D_g(s) L_g(s)       .     \end{equation}
                 
                Furthermore the coefficients of each Dirichlet polynomial are supported on divisors of $N$.   Let us define $F_{p,g}(s)$ to be the Euler factor of $L_g(s)$ at $p$, where $g \in  C$. Also, let $F_{p, (\xi_1, \xi_2)} (s)$ be the Euler factor of $L_{\xi_1, \xi_2} (s) $ at p.\\
                   We will say that the Euler products of two L functions $L_1, L_2$  are equivalent if their Euler factors are the same except for finitely many primes and inequivalent otherwise.  The Ranking--Selberg method  ( see \cite[Corollay~4.4]{booker2016converse}) implies that the $L$ functions on the right hand side of \eqref{Newform} are pairwise inequivalent. Combining this with the linear independence result in \cite[Theorem~2]{kaczorowski1999linear}, we see the right hand side of \eqref{Newform} has exactly one non-zero term. Hence $L_f(s)= D_{y}(s) L_{y}(s)$ for some $y \in C \cup X$. \\ \\
                 In either case 
                 \[ D_{y} (s) = \prod_{p|N} \frac{F_p(s)}{ F_{p,y} (s)}                  \]
                and $D_y$ satisfies a functional equation of the form 
                 $  D_y(s) = \varepsilon_y  (N_{y})^{\frac{1}{2}-s} \overline{D_y ( 1- \overline{s})} $ where $N_y$ is a positive integer, $ |\varepsilon_y|=1.$ The $\mathbb{Q}$ linear independence of $\log p$ for primes $p$ and the fact $D_{y}(s)$ is entire implies that $ \frac{F_p(s)}{F_{p,y}(s)}$ is entire for $p \mid N$ and its zeroes are symmetric with respect to the line $\Re(s)= \frac{1}{2}$.  \\ \\
                  Suppose $ y=(\xi_1, \xi_2) \in X$. \\ 
                  If $k 
                 \geq 2$, for $p>N$, $F_p(s) =  F_{p,y}(s)    $  has a pole on the line 
                 $ \Re(s) =  \frac{k-1}{2} \geq \frac{1}{2}$.  This is a contradiction to our assumptions on $F_p(s)$ in the statement of the Theorem \ref{Main thm}. Hence $k=1$. \\
                 When $k=1$, if  $p \mid N$,  $\frac{F_p(s)}{F_{p,y}(s)}$ has no zeroes for $\Re(s) \geq \frac{1}{2}$. The symmetry of the zeroes around $\Re(s)=1/2$ then implies there are in fact no zeroes.  In particular, $D_y(s)$ has no zeroes and the fundamental theorem of algebra implies it must be constant. Equating coefficients of $L_f(s)$ and $L_y(s)$  implies $D_y(s)=1$. The functional equation then implies $N=N_1N_2$ as required.  \\  \\
                  Suppose $y=g \in C$. \\
                 Deligne showed that $L_y$ satisfies the Ramanujan hypothesis, so $ L_y(s)$ lies in the Selberg class. Convergence of the logarithm in the definition of the Selberg class means  $F_{p,y} (s) $ has no zeroes and is analytic for $\Re(s) \geq 1/2$. Repeating the argument above, $D_y(s)=1$ and the conductor of $g$ equals $N$ by the functional equation as required. 
                 
                 \end{proof}
                 We now deal with the case that $\Lambda_1(s)$ is not entire. We shall deal with this issue as in \cite{Converse} by twisting away the poles. \\  Fix a prime $q \nmid N$ and a primitive character $\chi \pmod{q}$ and consider the twisted sequence $a_n'= a_n \chi(n)$ in place of $a_n$ and $Nq^2$ in place of $N$. Then all the hypotheses of Theorem \ref{Main thm} are satisfied, however now our associated L function $\Lambda_1(s)$ is entire. Hence, either there is a primitve cuspform $f'$ of conductor $Nq^2$ with Fourier coefficients $a_n' n^{  \frac{k-1}{2}  }$, or $k=1$ and there are primitive characters $\xi_1'$, $\xi_2'$ such that $a_n'= \sum_{d|n} \xi_1' (n/d) \xi_2'(d)$. \\
                 We deal with the cuspidal case first. By newform theory \cite[Theorem~3.2]{atkin1978twists}, we can twist $f'$ by $\overline{\chi}$ to get a primitive cuspform $f$ of conductor $Nq^j$, for some $j$ with Fourier coefficients $ a_n' \overline{\chi(n)} n^{   \frac{k-1}{2}  }= a_n n^{   \frac{k-1}{2} }$ for every $n$ coprime to $q$. Applying this argument to two different choices of $q$, strong multiplicity one implies $f$ has conductor $N$ and Fourier coefficients $a_n n^{\frac{k-1}{2}}$ as wanted. \\
                 In the non cuspidal case, let $\xi_i  \pmod{N_i}$ for $i=1,2$ be the primitive character inducing $ \xi_i' \overline{\chi}$. Then 
                 \begin{equation}{\label{Dirichlets theorem}}
                     a_p = \xi_1(p)+ \xi_2(p) \; \; \; \text{for all sufficiently large primes} \; \; p.
                 \end{equation} The characters $\xi_1$ and $\xi_2$ have opposite parity so we  normalize $\xi_1$ to be even. Dirichlet's theorem on primes in arithmetic progressions implies $\xi_1, \xi_2$ are uniquely defined by \eqref{Dirichlets theorem}. Again, using two choices of $q$, we have $N_1N_2= N$ and $ a_n = \sum_{d|n} \xi_1(n/d) \xi_2 (d)$. Hence we have completed the proof. 
                  
                 \section{Acknowledgements}
                 I would like to thank my supervisor Andrew Booker for all his help on this work. His suggestions and ideas have been incredibly useful.

                \bibliographystyle{amsplain}
\bibliography{sample}

\providecommand{\bysame}{\leavevmode\hbox to3em{\hrulefill}\thinspace}
\providecommand{\MR}{\relax\ifhmode\unskip\space\fi MR }
% \MRhref is called by the amsart/book/proc definition of \MR.
\providecommand{\MRhref}[2]{%
  \href{http://www.ams.org/mathscinet-getitem?mr=#1}{#2}
}
\providecommand{\href}[2]{#2}
\begin{thebibliography}{10}

\bibitem{atkin1978twists}
A.~O.~L. Atkin and Wen Ch'ing~Winnie Li, \emph{Twists of newforms and
  pseudo-eigenvalues of {$W$}-operators}, Invent. Math. \textbf{48} (1978),
  no.~3, 221--243. \MR{508986}

\bibitem{Converse}
Andrew~R. Booker, \emph{A converse theorem without root numbers}, Mathematika
  \textbf{65} (2019), no.~4, 862--873. \MR{3952509}

\bibitem{booker2014weil}
Andrew~R. Booker and M.~Krishnamurthy, \emph{Weil's converse theorem with
  poles}, Int. Math. Res. Not. IMRN (2014), no.~19, 5328--5339. \MR{3267373}

\bibitem{booker2016converse}
\bysame, \emph{A converse theorem for {${\rm GL}(n)$}}, Adv. Math. \textbf{296}
  (2016), 154--180. \MR{3490766}

\bibitem{kaczorowski1999linear}
J.~Kaczorowski, G.~Molteni, and A.~Perelli, \emph{Linear independence in the
  {S}elberg class}, C. R. Math. Acad. Sci. Soc. R. Can. \textbf{21} (1999),
  no.~1, 28--32. \MR{1669479}

\bibitem{kaczorowski2018hecke}
J.~Kaczorowski and A.~Perelli, \emph{On a {H}ecke-type functional equation with
  conductor {$q=5$}}, Ann. Mat. Pura Appl. (4) \textbf{197} (2018), no.~6,
  1707--1728. \MR{3855407}

\bibitem{kaczorowski2020classification}
J~Kaczorowski and A~Perelli, \emph{Classification of l-functions of degree 2
  and conductor 1}, arXiv preprint arXiv:2009.12329 (2020).

\bibitem{miyake2006modular}
Toshitsune Miyake, \emph{Modular forms}, english ed., Springer Monographs in
  Mathematics, Springer-Verlag, Berlin, 2006, Translated from the 1976 Japanese
  original by Yoshitaka Maeda. \MR{2194815}

\bibitem{neururer2018weil}
Michael Neururer and Thomas Oliver, \emph{Weil's converse theorem for {M}aass
  forms and cancellation of zeros}, Acta Arith. \textbf{196} (2020), no.~4,
  387--422. \MR{4164486}

\bibitem{perelli2005survey}
Alberto Perelli, \emph{A survey of the {S}elberg class of {$L$}-functions.
  {I}}, Milan J. Math. \textbf{73} (2005), 19--52. \MR{2175035}

\end{thebibliography}
 
  \textbf{School of Mathematics, University of Bristol, Bristol, BS8 1UG, United Kingdom }
  \\
 \textbf{Email address: michaelfarmer868@gmail.com}

\end{document}